%%%%%%%%%%%%%%%%%%%%%%%%%%%%%%%%%%%%%%%%%%%%%%%%%%%%%%%%%%%%%%%%%%%%%%%%%%%%%%%%%%%%%%%%
% Please do not change the following statements. You must start from line 43 "==>start from here"
%%%%%%%%%%%%%%%%%%%%%%%%%%%%%%%%%%%%%%%%%%%%%%%%%%%%%%%%%%%%%%%%%%%%%%%%%%%%%%%%%%%%%%%%
\documentclass[12pt, reqno]{amsart}
\usepackage{amsmath, amsthm, amscd, amsfonts, amssymb, graphicx, color}
\usepackage[bookmarksnumbered, colorlinks, plainpages]{hyperref}

\makeatletter \oddsidemargin.9375in \evensidemargin \oddsidemargin
\marginparwidth1.9375in \makeatother

%%%%%%%%%%%%%%%%%%%%%%%%%%%%%%%%
\newtheorem{theorem}{Theorem}[section]
\newtheorem{lemma}[theorem]{Lemma}

\theoremstyle{definition}
\newtheorem{definition}[theorem]{Definition}
\newtheorem{example}[theorem]{Example}

\theoremstyle{remark}
\newtheorem{remark}[theorem]{Remark}
\numberwithin{equation}{section}

\begin{document}
\setcounter{page}{1}

%%%%%%%%%%%%%%%%%%%%%%%%%%%%%%%%%%%%%%%%%%%%%%%
%% Please do not remove the following statement.
%%%%%%%%%%%%%%%%%%%%%%%%%%%%%%%%%%%%%%%%%%%%%%%
\noindent \textbf{{\footnotesize Journal of Algebraic Systems\\ Vol. XX, No XX, (201X), pp XX-XX}}\\[1.00in]
%%%%%%%%%%%%%%%%%%%%%%%%%%%%%%%%%%%%%%%%%%%%%%%

%%%%%%%%%%%%%%%%%%%%%%%%%%%%%%%%%%%%%%%%%%%%%%%%%%%%%%%%%%%%%%%%%%%%%
% Insert title of your article. Note: \title[short title]{full title}
%%%%%%%%%%%%%%%%%%%%%%%%%%%%%%%%%%%%%%%%%%%%%%%%%%%%%%%%%%%%%%%%%%%%%
\title{Ramanujan polar graphs}
%%%%%%%%%%%%%%%%%%%%%%%%%%%%%%%%%%%%%%
% Author's name must be inserted here
%%%%%%%%%%%%%%%%%%%%%%%%%%%%%%%%%%%%%%
\author[V. Smaldore]{V. Smaldore$^{*}$}

%%%%%%%%%%%%%%%%%%%%%%%%
\thanks{{\scriptsize
\hskip -0.4 true cm MSC(2020): Primary: 05C48; Secondary: 05E30,
51A50
\newline Keywords: Expander graphs, strongly regular graphs, polar spaces.\\
Received: 24 February 2024, Accepted: 25 August 2024.\\
$*$Corresponding author }}

%%%%%%%%%%%%%%%%%%%%%%%%%%%%%%%%%%%%%%%%%%%
\begin{abstract}
Recently, a construction of minimal codes arising from a family of almost Ramanujan graphs was shown. Ramanujan graphs are examples of expander graphs that minimize the second-largest eigenvalue of their adjacency matrix.  We call such graphs Ramanujan, since all known non-trivial constructions imply the Ramanujan conjecture on arithmetical functions. In this paper, we prove that some families of tangent graphs of finite classical polar spaces satisfy Ramanujan's condition. If the polarity is unitary, or it is orthogonal and the quadric is over the binary field, the tangent graphs are strongly regular, and we know their spectrum. By direct computation, it is possible to show which families of tangent graphs are Ramanujan.

\end{abstract}

%%%%%%%%%%%%%%%%%%%%%%%
\maketitle
%%%%%%%%%%%%%%%%%%%%%%%
\section{Introduction}
 \textit{Expander graphs} may be defined as graphs which satisfy sorts of \textit{extremal} properties. Here, we consider the expander graphs such that the second eigenvalue is as small as possible, the so-called \textit{Ramanujan graph}. Ramanujan graphs are called this way since the known non-trivial constructions assume true the \textit{Ramanujan conjecture} on arithmetical functions, see \cite[Section 28]{Rama}.
 %\begin{con}
%  Let $\tau:\mathbb{N}\rightarrow\mathbb{Z}$ be the arithmetical function defined by the identity
%  $$\sum_{n\geq1}\tau(n)q^{n}=q\prod_{n\geq1}(1-q^{n})^{24},$$
%  where $q=e^{2\pi iz}$, with $\Im(z)>0$. Then
%  \begin{equation*}
%   |\tau(p)|\leq2p^{\frac{11}{2}}.
%  \end{equation*}
% \end{con}
 In a recent paper, \cite{ABDN}, the authors showed a construction of minimal codes arising from almost Ramanujan graphs. In this paper, we prove that some families of \textit{tangent graphs} of finite classical polar spaces are examples of Ramanujan graphs. Tangent graphs of polar spaces are graphs whose vertex set is the set of non-isotropic points, and two vertices are adjacent if and only if their connecting line is tangent to the polar space. If the polarity is unitary, or it is orthogonal and the quadric is over the binary field, the tangent graph is a strongly regular graph. We refer to \cite[Section 3.1]{brvan} for all details on tangent graphs, while more information in the unitary case is in \cite{IPS, RS1}, and in the orthogonal case in \cite{Svob} and in \cite{RS} while $n=3$. We will prove the following theorem.
 \begin{theorem}\label{main}
  The following families of graph are Ramanujan
  \begin{enumerate}
   \item $NO^{\varepsilon}(2m,2)$, for $m\geq2$;\\
   \item $NO(2m+1,2)\cong\Gamma_{Q(2m,2)}$, for $m\geq2$;\\
   \item $NU(m,4)$, for $m\geq3$.
  \end{enumerate}
 \end{theorem}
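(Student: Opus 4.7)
The plan is to treat each of the three families separately, exploiting the fact that under the hypotheses of the theorem each tangent graph is strongly regular, so its entire spectrum is determined by the parameters $(n,k,\lambda,\mu)$. Once those parameters are in hand, checking the Ramanujan condition reduces to verifying, for each infinite family, a single algebraic inequality in $m$ (and, in the unitary case, trivially in $q=2$).

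First I would recall, from the references \cite{brvan,IPS,RS1,Svob,RS} cited in the introduction, the explicit $(n,k,\lambda,\mu)$ of the three families: the non-isotropic point graphs $NO^{+}(2m,2)$ and $NO^{-}(2m,2)$, the graph $NO(2m+1,2)\cong \Gamma_{Q(2m,2)}$, and the unitary non-isotropic tangent graph $NU(m,4)$. These parameters are polynomial in $2^{m}$ (respectively in $2^{m}$ for the unitary case, since $q=2$), and they determine the two non-principal eigenvalues
\[
r,s \;=\; \tfrac{1}{2}\!\left[(\lambda-\mu)\pm\sqrt{(\lambda-\mu)^{2}+4(k-\mu)}\,\right].
\]
I would tabulate $k$, $r$, $s$ for each family as closed-form expressions in $m$.

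Next I would verify the Ramanujan bound $\max(|r|,|s|)\le 2\sqrt{k-1}$ for each family. Because both $k$ and $\max(|r|,|s|)$ are of comparable order in the leading power of $2^{m}$, the inequality is not entirely automatic and has to be checked; however, after squaring it becomes a polynomial inequality in the single variable $X=2^{m}$, which can be rearranged into the form $P(X)\ge 0$ for an explicit polynomial $P$. For each family I would show $P(X)\ge 0$ for all $X\ge 4$ (i.e.\ $m\ge 2$, resp.\ $m\ge 3$ in the unitary case), either by factoring $P$ or by exhibiting the coefficients of $P$ and arguing that for $X\ge 4$ the positive leading terms dominate the negative lower-order terms. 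The small base cases $m=2$ (and $m=3$ for $NU(m,4)$) would be verified numerically by direct substitution.

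The main obstacle I anticipate is bookkeeping rather than mathematical depth: the two choices of sign $\varepsilon=\pm$ in $NO^{\varepsilon}(2m,2)$ give slightly different parameter sets, so the polynomial inequality must be checked in both cases, and the unitary family $NU(m,4)$ has distinct parameter formulas depending on the parity of $m$ (since the behaviour of the Hermitian form differs for odd and even rank). For each of these subcases one has to carry out the algebra carefully and ensure that, even in the tightest case, $(\lambda-\mu)^{2}+4(k-\mu)\le 16(k-1)$ holds on the stated range of $m$. Once this is checked, the Ramanujan property follows immediately from the definition, completing the proof.
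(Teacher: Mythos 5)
Your proposal follows essentially the same route as the paper: quote the strongly regular graph parameters for each family, extract the non-principal eigenvalues, and reduce the Ramanujan bound to a quadratic inequality in $x=2^{m-1}$ (resp.\ $2^{m-2}$), split over the sign $\varepsilon$ and the parity of $m$, with small cases checked directly. The one point your plan would stumble on is $NO^{+}(4,2)\cong K_{3,3}$: there the smallest eigenvalue is $-d=-3>2\sqrt{d-1}$ in absolute value, so the plain inequality fails and one must invoke the \emph{bipartite} Ramanujan definition (discarding the eigenvalue $-d$), exactly as the paper does for that single boundary case.
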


 Section \ref{sec2} give some preliminaries on extremal graphs and finite classical polar spaces, while in Section \ref{sec3} we find the constructions of tangent graphs. Finally, Section \ref{sec4} is devoted to the proof of Theorem \ref{main}.

\section{Preliminaries}\label{sec2}
\subsection{Extremal graphs}\label{subsec21}
 Let $G=(V,E)$ be a (simple, non-directed) graph of $n$ vertices. The \textit{adjacency matrix} $A$ of $G$ is a $n\times n$ $(0,1)$-symmetric matrix whose entries are indexed by the vertices of $G$ and where $A(i,j)=1$ if and only if $v_{i}$ and $v_{j}$ are adjacent vertices. The \textit{spectrum} of a graph $G$ is the multiset of the eigenvalues of its adjacency matrix, counted with their respective multiplicities.

 If $G$ is connected and $d$-regular, then $A$ has $(1,\ldots,1)^{T}$ as eigenvector, with relative eigenvalue $d$ of multiplicity 1. We denote the eigenvalues of $A$ by $d=\lambda_{1}>\lambda_{2}>\ldots>\lambda_{j}$, with relative multiplicities $1=m_{1},m_{2},\ldots,m_{j}$, and we write the spectrum as $(d,\lambda_{2}^{m_{2}},\ldots,\lambda_{j}^{m_{j}})$. A graph $G$ is called an $(n,d,\lambda)$-graph if it is a $d$-regular graph on $n$ vertices with $\lambda_2=max_{i>1}|\lambda_{i}|=\lambda$. A classical tool for studying $(n,d,\lambda)$-graphs is the following lemma.

 \begin{lemma}[\textbf{Expander-mixing lemma}]
  Let $G=(V,E)$ be an $(n,d,\lambda)$-graph and $S,T$ be two vertex subsets of $V$. Denote by $e(S,T)$ the number of edges in $E$ connecting a vertex of $S$ and a vertex of $T$. Then,
  \begin{equation*}
   \Big|e(S,T)-\frac{d|S||T|}{n}\Big|\leq\lambda\sqrt{|S||T|\Big(1-\frac{|S|}{n}\Big)\Big(1-\frac{|T|}{n}\Big)}.
  \end{equation*}
 \end{lemma}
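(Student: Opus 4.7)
The plan is to express $e(S,T)$ as a quadratic form in the adjacency matrix and then use the spectral decomposition of $A$ to isolate the main term $d|S||T|/n$ from an error term controlled by $\lambda$.

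First I would observe that if $\mathbf{1}_S,\mathbf{1}_T\in\mathbb{R}^n$ are the characteristic vectors of $S$ and $T$, then
\[
 e(S,T)=\mathbf{1}_S^{T}A\,\mathbf{1}_T,
\]
since the $(u,v)$ entry of $A$ is $1$ precisely when $uv$ is an edge. Because $G$ is connected and $d$-regular, the all-ones vector $\mathbf{j}$ is an eigenvector of $A$ with eigenvalue $d$, and the orthogonal complement $\mathbf{j}^{\perp}$ is $A$-invariant with every eigenvalue bounded in absolute value by $\lambda$. Decompose
\[
 \mathbf{1}_S=\tfrac{|S|}{n}\mathbf{j}+\mathbf{x},\qquad \mathbf{1}_T=\tfrac{|T|}{n}\mathbf{j}+\mathbf{y},
\]
where $\mathbf{x},\mathbf{y}\in\mathbf{j}^{\perp}$.

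Next I would substitute this decomposition into the bilinear form $\mathbf{1}_S^{T}A\,\mathbf{1}_T$. Three of the four cross-terms involve $\mathbf{j}$ paired with a vector in $\mathbf{j}^{\perp}$, and since $A\mathbf{j}=d\mathbf{j}$ these contribute either the main term or zero. Specifically, the coefficient of $\mathbf{j}^{T}A\mathbf{j}=dn$ yields exactly $d|S||T|/n$, the mixed terms vanish by orthogonality, and one is left with
\[
 e(S,T)-\frac{d|S||T|}{n}=\mathbf{x}^{T}A\,\mathbf{y}.
\]

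Then I would bound $|\mathbf{x}^{T}A\mathbf{y}|$ by $\lambda\|\mathbf{x}\|\,\|\mathbf{y}\|$: this follows from Cauchy–Schwarz together with the fact that $A$ restricted to $\mathbf{j}^{\perp}$ has operator norm at most $\lambda$. Finally, I would compute the norms of the orthogonal components by Pythagoras. Since $\|\mathbf{1}_S\|^{2}=|S|$ and the projection onto $\mathbf{j}$ has squared length $|S|^{2}/n$, one finds
\[
 \|\mathbf{x}\|^{2}=|S|-\frac{|S|^{2}}{n}=|S|\!\left(1-\frac{|S|}{n}\right),
\]
and analogously for $\|\mathbf{y}\|^{2}$. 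Combining the two estimates gives the announced inequality.

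There is no serious obstacle here; the only point requiring a little care is verifying that the operator norm of $A$ on $\mathbf{j}^{\perp}$ is indeed $\lambda$, which is immediate from the spectral theorem applied to the symmetric matrix $A$ together with the definition of $\lambda=\max_{i>1}|\lambda_i|$. The rest of the argument is a direct computation and an application of Cauchy–Schwarz.
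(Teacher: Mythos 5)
Your argument is correct and is the standard spectral proof of the expander-mixing lemma; note that the paper itself states this lemma without proof, citing it as a classical tool, so there is no in-paper argument to compare against. The decomposition $\mathbf{1}_S=\frac{|S|}{n}\mathbf{j}+\mathbf{x}$, the cancellation of the cross-terms, the bound $|\mathbf{x}^{T}A\mathbf{y}|\leq\lambda\|\mathbf{x}\|\|\mathbf{y}\|$ on $\mathbf{j}^{\perp}$, and the Pythagorean computation of the norms are all exactly as in the usual treatment (e.g.\ Hoory--Linial--Wigderson). The only point worth flagging is a convention: $\mathbf{1}_S^{T}A\,\mathbf{1}_T$ counts ordered pairs $(u,v)\in S\times T$ with $uv\in E$, so when $S\cap T\neq\emptyset$ an edge with both endpoints in $S\cap T$ is counted twice; your identity $e(S,T)=\mathbf{1}_S^{T}A\,\mathbf{1}_T$ therefore implicitly adopts that counting convention, which is the one under which the stated inequality is the standard result.
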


 \normalsize
 Since the second term depends on $\lambda$, it is natural to try to minimize this term. A lower bound for how small $\lambda$ can be was given in \cite{N}.
 \begin{theorem}
  In a $(n,d,\lambda)$-graph $G$ it asymptotically holds:
  \begin{equation}\label{Rama}
   \lambda\geq2\sqrt{d-1}-o(1).
  \end{equation}
 \end{theorem}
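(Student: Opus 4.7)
The plan is to run the classical Alon--Boppana argument via the Courant--Fischer variational characterization of eigenvalues, using test functions supported on balls in $G$. Fix an integer parameter $r = r(n)$ with $r \to \infty$ as $n \to \infty$; since the ball $B_s(v)$ of radius $s$ around any vertex of a $d$-regular graph contains at most $1 + d\sum_{j=0}^{s-1}(d-1)^j = O((d-1)^s)$ vertices, the choice $r = \lfloor \tfrac{1}{3}\log_{d-1} n\rfloor$ guarantees, for $n$ large, the existence of two vertices $v_1, v_2 \in V$ at graph distance at least $2r+2$. In particular, the closed balls $B_r(v_1)$ and $B_r(v_2)$ are vertex-disjoint.

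Next I would introduce, for each $i \in \{1,2\}$, the test function $f_i: V \to \mathbb{R}$ defined by $f_i(u) = (d-1)^{-\mathrm{dist}(v_i,u)/2}$ whenever $\mathrm{dist}(v_i,u) \le r$, and $f_i(u) = 0$ otherwise (with a small Nilli-style modification weighting the boundary sphere, and subtracting a multiple of the all-ones vector so that $\tilde f_i := f_i - c_i \mathbf{1}$ is orthogonal to the Perron eigenvector). Disjointness of supports gives $\langle \tilde f_1, \tilde f_2\rangle = 0$, and both remain orthogonal to $\mathbf{1}$. The core step is the Rayleigh quotient estimate
$$\frac{\langle A f_i, f_i\rangle}{\langle f_i, f_i\rangle} \;\ge\; 2\sqrt{d-1}\,\bigl(1 - O(1/r)\bigr),$$
obtained by expanding $\langle Af_i, f_i\rangle = 2\sum_{\{u,w\}\in E} f_i(u) f_i(w)$ and noting that the contribution of the BFS-tree edges rooted at $v_i$ yields a geometric series whose ratio with $\langle f_i, f_i\rangle$ equals $2\sqrt{d-1}$ up to the boundary correction at level $r$. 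Any additional edges contained in $B_r(v_i)$ only increase the numerator, so the computation on the $d$-regular tree provides a valid lower bound without any hypothesis on the girth or cycle structure of $G$.

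By the min--max theorem applied to the two-dimensional orthogonal test space $\mathrm{span}\{\tilde f_1, \tilde f_2\}$, the second largest eigenvalue satisfies $\lambda_2 \ge 2\sqrt{d-1}\,(1 - O(1/r))$, and since $\lambda = \max_{i>1}|\lambda_i| \ge \lambda_2$ and $r \to \infty$ with $n$, the asymptotic inequality \eqref{Rama} follows. The main obstacle is the Rayleigh quotient computation at the boundary sphere, where the truncation breaks the exact equality valid on the infinite $d$-regular tree; this is handled by Nilli's refinement of weighting two consecutive boundary levels, which rebalances the sums and produces the $1 - O(1/r)$ factor cleanly. A secondary technical point, placing the test functions in the orthogonal complement of $\mathbf{1}$, is automatic up to a rank-one correction that affects the Rayleigh quotient by at most $O(1/|B_r|)$, which is absorbed into the same error term.
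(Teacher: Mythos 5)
First, a point of comparison: the paper does not actually prove this statement — it is the classical Alon--Boppana bound, quoted with the citation \cite{N} — so there is no internal proof to measure your attempt against. Your plan is precisely the standard variational argument from that reference: two far-apart centres, exponentially decaying test functions supported on disjoint balls, a Rayleigh-quotient estimate modelled on the $d$-regular tree, and Courant--Fischer on a two-dimensional test space. The skeleton is the right one, and your counting argument for the existence of two centres at distance at least $2r+2$ with $r=\lfloor\tfrac{1}{3}\log_{d-1}n\rfloor$ is correct.

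Two steps do not survive scrutiny as written. First, the key claim $\langle Af_i,f_i\rangle/\langle f_i,f_i\rangle\ge 2\sqrt{d-1}\,(1-O(1/r))$ fails for \emph{vertex}-rooted balls: the root has $d$ (not $d-1$) neighbours at level $1$, and the resulting extra contribution to $\sum_{\{u,w\}\in E}(f_i(u)-f_i(w))^2$ is a fixed constant $(1-(d-1)^{-1/2})^2$, which is negligible only if $\|f_i\|^2\to\infty$. That is not guaranteed: in a $d$-regular graph the sphere sizes $n_j$ can remain bounded (prism-like graphs, discrete tori), in which case $\|f_i\|^2=\sum_j n_j(d-1)^{-j}$ converges and you lose a non-vanishing additive constant, so the $-o(1)$ error term is not recovered. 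Nilli's actual construction roots the test functions at two \emph{edges} at mutual distance at least $2k+2$; then every level, including level $0$, has branching factor at most $d-1$, the level masses $n_j(d-1)^{-j}$ are non-increasing, the boundary term is at most a $\tfrac{1}{k+1}$ fraction of $\|f_i\|^2$, and one gets the clean bound $\lambda_2\ge 2\sqrt{d-1}-\tfrac{2\sqrt{d-1}-1}{k+1}$. Second, the device $\tilde f_i=f_i-c_i\mathbf{1}$ is incoherent as stated: after subtracting a multiple of the all-ones vector the supports are no longer disjoint, so $\langle\tilde f_1,\tilde f_2\rangle=0$ does not follow; moreover, what the splitting of $\langle Af,f\rangle$ over the span really requires is not disjointness but \emph{non-adjacency} of the two supports (which your distance $2r+2$ does provide, though you only invoke disjointness). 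The standard repair is to drop the $\tilde f_i$ entirely and either take the single combination $af_1+bf_2\perp\mathbf{1}$ or apply Courant--Fischer directly to $\mathrm{span}\{f_1,f_2\}$, on which the Rayleigh quotient is a mediant of the two individual quotients. With these two corrections your argument becomes Nilli's proof.
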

 Ramanjuan graphs minimize asymptotically the value of $\lambda$, see \cite{HLW}.
 \begin{definition}
  A $(n,d,\lambda)$-graph is called \textit{Ramanujan graph} if
  $$\lambda\leq2\sqrt{d-1}.$$
 \end{definition}
 As the smallest eigenvalue is $-d$ if and only if the graph is bipartite, we may refine the definition in order to allow \textit{bipartite Ramanujan graphs}, not considering the eigenvalue $-d$.
  \begin{definition}
  A $(n,d,\lambda)$-bipartite graph is called \textit{bipartite Ramanujan graph} if
  \begin{equation*}
   max_{i:|\lambda_{i}|<d}|\lambda_{i}|\leq2\sqrt{d-1}.
  \end{equation*}
 \end{definition}

 We list above some known examples of Ramanujan graphs:
 \begin{itemize}
  \item The complete graph $K_{d+1}$ has spectrum $(d,-1^{d})$, and then, while $d>1$, it is a Ramanujan graph.
  \item The complete bipartite graph $K_{d,d}$ has spectrum $(d,0^{2d-2},-d)$, and then, while $d>1$, it is a bipartite Ramanujan graph.
  \item Payley graphs of order $q\equiv1\pmod5$ have regularity $\frac{q-1}{2}$ and the other eigenvalues are $\frac{-1\pm\sqrt{q}}{2}$, and then they are Ramanujan graphs $(q\neq1)$.
  \item Lubotzky-Phillips-Sarnak construction, see \cite{LPS}, gives rise to a family of $(p+1)$-regular graphs $X^{p,r}$, where $p\equiv1\pmod4$ is a prime number. $X^{p,r}$ is the Cayley graph with $p+1$ generators associated with the solutions of the equation $a_0^2+a_1^2+a_2^2+a_3^2=p$, $a_0>0$ odd and $a_1, a_2, a_3$ even, defined on $PSL(2,\mathbb{Z}/p\mathbb{Z})$, or respectively on $PGL(2,\mathbb{Z}/p\mathbb{Z})$, according if $p$ is a quadratic residuo modulo $r$ or not. Then $X^{p,r}$ is a Ramanujan graph. The construction was later generalized by Morgenstern in \cite{M}, for all prime powers $p$, taking a prime $r\equiv1\pmod4$, $r\neq p$.
 \end{itemize}

 \subsection{Finite classical polar spaces}\label{subsec22}
  Let $PG(N,q)$ denote the $N$-dimensional projective space over the finite field $\mathbb{F}_{q}$. A non-degenerate sesquilinear or non-singular quadratic form on the underlying $(N+1)$-dimensional vector space induces a geometry embedded in $PG(N,q)$, which will be called a \textit{finite classical polar space}, or simply polar space. Its elements are the totally isotropic, respectively totally singular, subspaces with relation to the sesquilinear, respectively quadratic, form. The subspaces of maximal dimension contained in a polar space are called \textit{generators}, and the projective dimension of a generator is $m-1$. The vectorial dimension $m$ of a generator is called \textit{rank} of the polar space. Based on the classification of non-degenerate sesquilinear, respectively non-singular, quadratic forms over finite fields, one distinguishes between 3 families of polar spaces: orthogonal polar spaces, symplectic polar spaces, and Hermitian polar spaces. Symplectic polar spaces live only in projective spaces of odd dimensions, and they correspond to alternating sesquilinear forms. Orthogonal polar spaces correspond to symmetric forms, and they fall apart in 3 subfamilies, i.e. hyperbolic quadrics, elliptic quadrics (both in odd projective dimension), and parabolic quadrics (in even projective dimension). We distinguish between elliptic and hyperbolic quadrics by their rank. Hermitian polar spaces correspond to Hermitian forms, and we distinguish between odd and even projective dimension cases as well. Note that Hermitian polar spaces are only defined over a field of square order. The type of the polar space $e$ allows us to identify different families of polar spaces with the same rank.
  \begin{table}
\begin{center}
\begin{tabular}{|c|c|c|}
\hline
\textbf{Polar space} & \textbf{Projective dimension} & $\mathbf{e}$ \\
\hline
Elliptic quadric $Q^-(2m+1,q)$ & $2m+1$ & 2 \\
\hline
Hyperbolic quadric $Q^+(2m-1,q)$ & $2m-1$ & 0 \\
\hline
Parabolic quadric $Q(2m,q)$ & $2m$ & 1 \\
\hline
Symplectic space $W(2m-1,q)$ & $2m-1$ & 1 \\
\hline
Hermitian space $H(2m,q)$ & $2m$ & $\frac{3}{2}$ \\
\hline
Hermitian space $H(2m-1,q)$ & $2m-1$ & $\frac{1}{2}$ \\
\hline
\end{tabular}
\caption{$\mathcal{P}_{m,e}$ polar space of rank $m \geq 1$}\label{tab:polarspaces}
\end{center}
\end{table}
\normalsize
The notation $\mathcal{P}_{m,e}$ stands for a polar space of rank $m$ and type $e$, listed in Table~\ref{tab:polarspaces}.

\section{Tangent graphs}\label{sec3}
In this section, we will define the parameters of tangent graphs w.r.t. a polar space. Roughly speaking, those graphs have, as vertex set, the set of non-isotropic points and two vertices are adjacent if and only if the corresponding points lie on a line that is tangent to the polar space. See \cite[Section 3.1]{brvan} for all details. More information can be found also in \cite{Svob,RS1,RS}.
 \subsection{$NO^{\varepsilon}(2m,2)$}\label{subsec31}
  Let $PG(2m-1,2)$ be a projective space of odd projective dimension over the binary field, and let $Q^{\varepsilon}(2m-1,2)$ be a non-degenerate quadric, hyperbolic or elliptic when $\varepsilon=+,-$, respectively. Then $NO^{\varepsilon}(2m,2)$ is the strongly regular graph with vertex set $PG(2m-1,2)\setminus Q^{\varepsilon}(2m-1,2)$, where two vertices are adjacent if and only if the points are orthogonal. The graph has $n=2^{2m-1}-\varepsilon2^{m-1}$ vertices, it is $d$-regular with $d=2^{2m-2}-1,$ and since the graph is strongly regular it has exactly other two eigenvalues $\lambda_{2}=\varepsilon2^{m-2}-1$ and $\lambda_{3}=-\varepsilon2^{m-1}-1$.

  \subsection{$NO(2m+1,2)$}\label{subsec32}
  Let $PG(2m,2)$ be a projective space of even projective dimension over the binary field, let $Q(2m,2)$ be a non-degenerate parabolic quadric, and let $N$ be the nucleus of $Q(2m,2)$, i.e. the intersecting point of all its tangent lines. Then $NO(2m+1,2)$ is the strongly regular graph with vertex set $PG(2m,2)\setminus (Q(2m,2)\cup\{N\})$, where two vertices are adjacent if and only if the points are orthogonal. The graph has $n=2^{2m}-1$ vertices, is $d$-regular with $d=2^{2m-1}-2,$ and since the graph is strongly regular it has exactly other two eigenvalues $\lambda_{2,3}=-1\pm2^{m-1}$. Note that the graph is isomorphic to both $\Gamma_{Q(2m,2)}$ and $\Gamma_{W(2m-1,2)}$, the collinearity graphs of the parabolic quadric $Q(2m,2)$ and the symplectic space $W(2m-1,2)$, where the collinearity graph $\Gamma_{\mathcal{P}_{m,e}}$ of the polar space $\mathcal{P}_{m,e}$ is the graph whose vertex set is $\mathcal{P}_{m,e}$, and two vertices are adjacent if they span an isotropic line, see \cite{brvan}.

  \subsection{$NU(m,q^2)$}\label{subsec33}
  Let $PG(m-1,q^2)$ be a projective space over a field of square order, and let $H(m-1,q^2)$ be a non-degenerate Hermitian variety. Then $NU(m,q^2)$ is the strongly regular graph with vertex set $PG(m-1,q^2)\setminus H(m-1,q^2)$, where two vertices are adjacent if and only if they are joined by a line tangent to $H(m-1,q^2)$. Define $\varepsilon=(-1)^m$. The graph has $n=\frac{q^{m-1}(q^{m}-\varepsilon)}{q+1}$ vertices, is $d$-regular with $d=(q^{m-1}+\varepsilon)(q^{m-2}-\varepsilon)$ and since it is strongly regular it has exactly other two eigenvalues $\lambda_{2}=\varepsilon q^{m-2}-1$ and $\lambda_{3}=-\varepsilon q^{m-3}(q^2-q-1)-1$. Here, we focus on the case $q=2$. The graph $NU(m,4)$ has $2^{2m-1}-\varepsilon2^{m-1}$ vertices, is $d$-regular with $d=(2^{m-1}+\varepsilon)(2^{m-2}-\varepsilon)$ and since the graph is strongly regular it has exactly other two eigenvalues $\lambda_{2}=\varepsilon 2^{m-2}-1$ and $\lambda_{3}=-\varepsilon 2^{m-3}-1$.

 \section{Proof of Theorem \ref{main}}\label{sec4}
  We start by considering graphs $NO^{\varepsilon}(2m,2)$. By a straightforward calculation we get the following results:
  \begin{lemma}\label{NOlem}
   \begin{itemize}
    \item $\lambda(NO^{+}(2m,2))=2^{m-1}+1$.
    \item $\lambda(NO^{-}(2m,2))=2^{m-1}-1$ for $m\geq3$, while $\lambda(NO^{-}(4,2))=2$.
   \end{itemize}
  \end{lemma}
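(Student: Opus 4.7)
The plan is simply to use the explicit spectrum of $NO^{\varepsilon}(2m,2)$ recorded in Subsection~\ref{subsec31}, namely eigenvalues $d = 2^{2m-2}-1$ of multiplicity one, together with $\lambda_2 = \varepsilon 2^{m-2}-1$ and $\lambda_3 = -\varepsilon 2^{m-1}-1$. Since the graph is connected and $d$-regular (in fact strongly regular), the relevant quantity is $\lambda = \max(|\lambda_2|,|\lambda_3|)$, and it suffices to compare the two absolute values in each of the four parameter cases $(\varepsilon,m)$.

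For $\varepsilon=+1$ and $m\geq 2$ both quantities are easy: $|\lambda_2| = 2^{m-2}-1 \geq 0$ and $|\lambda_3| = 2^{m-1}+1$, and clearly $2^{m-1}+1 > 2^{m-2}-1$ for every $m\geq 2$, yielding the first bullet. For $\varepsilon=-1$ we instead have $|\lambda_2| = 2^{m-2}+1$ and $|\lambda_3| = |2^{m-1}-1|$. The inequality $2^{m-1}-1 \geq 2^{m-2}+1$ reduces to $2^{m-2}\geq 2$, i.e.\ $m\geq 3$; so for $m\geq 3$ we obtain $\lambda = 2^{m-1}-1$, with equality between $|\lambda_2|$ and $|\lambda_3|$ precisely at $m=3$ (both equal $3$). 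For the boundary value $m=2$ the comparison flips: $|\lambda_2|=2$ while $|\lambda_3|=1$, which gives the stated exceptional value $\lambda(NO^{-}(4,2))=2$.

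There is no real obstacle here: the argument is a direct numerical check, and the only point that requires any attention is the boundary between $m=2$ and $m\geq 3$ in the elliptic case, where the ordering of $|\lambda_2|$ and $|\lambda_3|$ reverses. This is precisely the reason the second bullet of the lemma is split into two clauses.
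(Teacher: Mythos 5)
Your proposal is correct and takes exactly the approach of the paper, whose proof of Lemma~\ref{NOlem} is the one-line instruction to compare the values of $\lambda_{2,3}$ from Subsection~\ref{subsec31}; you simply carry out that comparison explicitly, including the sign/ordering flip at $m=2$ in the elliptic case that explains the exceptional value $\lambda(NO^{-}(4,2))=2$.
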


  \begin{proof}
   Compare the values of $\lambda_{2,3}$ in Subsection \ref{subsec31}
  \end{proof}

  \begin{example}
   Graphs $NO^{\varepsilon}(2m,2)$ with $m=1,2$ may be characterized as follows:
   \begin{enumerate}
    \item $NO^{+}(2,2)\cong K_{1}$.\\
    \item $NO^{-}(2,2)\cong 3K_{1}$.\\
    \item $NO^{+}(4,2)\cong K_{3,3}$.\\
    \item $NO^{-}(4,2)\cong P$, where $P$ is the \textit{Petersen graph}.\\
   \end{enumerate}
  \end{example}

  \begin{figure}\label{f1}
   \centering
   \includegraphics[scale=0.7]{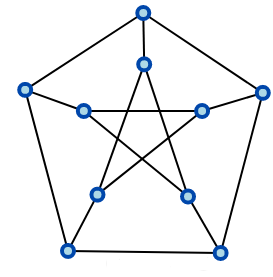}
   \caption{Petersen graph $P\cong NO^{-}(4,2)$}
  \end{figure}

  \begin{proof}
   We give some explicit calculations in the case $m=2$. A hyperbolic quadric $Q^{+}(3,2)$ has 9 points on two reguli of 6 lines. Each of the 6 points $P_{i}$ on $PG(3,2)\setminus Q^{+}(3,2)$ lies on 3 tangent lines meeting the quadric in 3 points of a conic $\mathcal{C}$. Out of $Q^{+}(3,2)$ we find two skew external \textit{nuclei lines}, representing the bipartition of the graph $NO^{+}(4,2)\cong K_{3,3}$.\\

   \textit{Petersen graph} is a 3-regular strongly regular graph with 10 vertices, any two adjacent vertices have no common neighbours and any two non-adjacent vertices have exactly one common neighbour. An elliptic quadric $Q^{-}(3,2)$ is made of 5 points, no three on a line, i.e. it is a 5-ovoid of $PG(3,2)$. Fix an external line $\ell$, then two points $P_1$ and $P_2$ on $\ell$ represent two non-adjacent vertices, $P_1$ lies on 3 tangent lines meeting the quadric in 3 points of a conic $\mathcal{C}_1$, and analogously $P_2$ lies on 3 tangent lines meeting the quadric in 3 points of a conic $\mathcal{C}_2$. Then $\mathcal{C}_1$ and $\mathcal{C}_2$ meet themselves in exactly one point. Therefore, two non-adjacent vertices of $NO^{-}(4,2)$ have exactly one common neighbour. Now fix a line $r$ tangent to $Q^{-}(3,2)$ at $T$. The other two points $T_1$ and $T_2$ on $r$ represent adjacent vertices, $T_1$ lies on 3 tangent lines meeting the quadric in 3 points of a conic $\mathcal{C}_A$, and analogously $T_2$ lies on 3 tangent lines meeting the quadric in 3 points of a conic $\mathcal{C}_B$. Then $\mathcal{C}_A$ and $\mathcal{C}_B$ meet themselves exactly in the point $T$. Therefore, two adjacent vertices of $NO^{-}(4,2)$ have no common neighbour.
  \end{proof}

  Now we consider the Hermitian space $H(m-1,4)$.
  \begin{example}
   Graphs $NU(m,4)$ with $m=2,3$ may be characterized as follows:
   \begin{enumerate}
    \item $NU(2,4)\cong2K_1$, and it is trivially non-Ramanujan.\\
    \item $NU(3,4)\cong\overline{4K_3}$, see \cite[Subsection 4.2]{RS1}.
   \end{enumerate}
   \end{example}
  \begin{figure}\label{f2}
   \centering
   \includegraphics[scale=0.8]{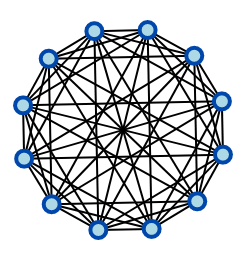}
   \caption{$NU(3,4)\cong\overline{4K_3}$}
  \end{figure}
    Some direct calculation gives the following result.
  \begin{lemma}\label{NUlem}
  \begin{itemize}
   \item If $m>2$ is even, $\lambda(NU(m,4))=2^{m-2}-1$.
  \item If $m$ is odd, $\lambda(NU(m,4))=2^{m-2}+1$.
  \end{itemize}
  \end{lemma}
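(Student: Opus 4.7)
The plan is to obtain both claims by direct case analysis on the parity of $m$, plugging into the eigenvalue formulas recorded in Subsection \ref{subsec33}. Recall that for $NU(m,4)$ with $\varepsilon=(-1)^m$ one has the two non-principal eigenvalues
\begin{equation*}
 \lambda_{2}=\varepsilon 2^{m-2}-1,\qquad \lambda_{3}=-\varepsilon 2^{m-3}-1,
\end{equation*}
so $\lambda(NU(m,4))=\max\{|\lambda_{2}|,|\lambda_{3}|\}$ and nothing else is required beyond comparing these two quantities.

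In the first case, assume $m>2$ is even, so $\varepsilon=+1$. Then $|\lambda_{2}|=2^{m-2}-1$ (positive since $m\geq 4$) and $|\lambda_{3}|=2^{m-3}+1$. I would verify that $2^{m-2}-1\geq 2^{m-3}+1$, which rewrites as $2^{m-3}\geq 2$, i.e.\ $m\geq 4$; for $m=4$ equality holds, and for larger even $m$ the inequality is strict. This gives $\lambda(NU(m,4))=2^{m-2}-1$.

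In the second case, $m$ is odd, so $\varepsilon=-1$; then $|\lambda_{2}|=2^{m-2}+1$ and $|\lambda_{3}|=|2^{m-3}-1|$. For $m\geq 3$ the inequality $2^{m-2}+1>2^{m-3}-1$ is immediate (in fact $2^{m-2}+1>2^{m-3}+1>|2^{m-3}-1|$), so $\lambda(NU(m,4))=2^{m-2}+1$, as claimed.

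There is no real obstacle here: the only thing to watch out for is the sign-handling when taking absolute values (since $\lambda_{2}$ and $\lambda_{3}$ can each be negative depending on $\varepsilon$), and the small edge case $m=4$ where the two absolute values coincide. Once the formulas of Subsection \ref{subsec33} are substituted, the lemma reduces to comparing $2^{m-2}-1$ with $2^{m-3}+1$ (and $2^{m-2}+1$ with $2^{m-3}-1$), both of which are elementary.
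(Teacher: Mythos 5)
Your proposal is correct and follows exactly the route the paper takes: its proof of Lemma \ref{NUlem} is the one-line instruction to ``compare the values of $\lambda_{2,3}$ in Subsection \ref{subsec33}'', and you have simply carried out that comparison explicitly, including the correct sign-handling for $\varepsilon=(-1)^m$ and the borderline case $m=4$ where $|\lambda_2|=|\lambda_3|=3$. Nothing further is needed.
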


  \begin{proof}
   Compare the values of $\lambda_{2,3}$ in Subsection \ref{subsec33}
  \end{proof}

We now have all the ingredients for the proof of the main theorem.
\begin{proof}[Proof of Theorem \ref{main}]
We give the proof of the theorem by comparing the smallest eigenvalues as in Lemmas \ref{NOlem} and \ref{NUlem}.
  \begin{enumerate}
   \item $NO^{\varepsilon}(2,2)$ is trivially non-Ramanujan since $d=0$, while $NO^{+}(4,2)$ is a complete bipartite graph, and then it is bipartite Ramanujan, see Subsection \ref{subsec21}. Moreover, the Petersen graph $NO^{-}(4,2)$ is Ramanujan since $d=3$, $\lambda=2$ and $2\leq2\sqrt{2}$. Now we analyze the inequality arising from Equation \eqref{Rama}:
  $$2^{m-1}+\varepsilon\leq2\sqrt{2^{2m-2}-2},$$
  $$(2^{m-1}+\varepsilon)^2\leq4(2^{2m-2}-2),$$
  $$2^{2m-2}+\varepsilon2\cdot2^{m-1}+1\leq4\cdot2^{2m-2}-8.$$
  By substituting $x=2^{m-1}$ we get
  $$3x^{2}-\varepsilon2x-9\geq0.$$
  In $"+"$ case we get $2^{m-1}=x\geq2,097\ldots$, then $NO^{+}(2m,2)$ is Ramanujan for $m\geq3$, in $"-"$ case we get $2^{m-1}=x\geq1,431\ldots$, then $NO^{-}(2m,2)$ is Ramanujan when $m\geq2$.\\
   \item In $m=1$ case, $\Gamma_{\mathcal{C}}\cong 3K_{1}$, since the conic $\mathcal{C}=Q(2,2)$ on $PG(2,2)$ consists on a 3-arc and it does not contain lines. Then the graph is trivially non-Ramanujan. While $m\geq2$, $\lambda(\Gamma_{Q(2m,2)})=2^{m-1}+1$. Then the inequality arising from Equation \eqref{Rama} gives:
  $$2^{m-1}+1\leq2\sqrt{2(2^{2m-2}-1)-1},$$
  $$(2^{m-1}+1)^2\leq8(2^{2m-2}-1)-4,$$
  $$2^{2m-2}+2\cdot2^{m-1}+1\leq8\cdot2^{2m-2}-12,$$
  $$7\cdot2^{2m-2}-2\cdot2^{m-1}-13\geq0.$$
  By substituting $x=2^{m-1}$ we get
  $$7x^{2}-2x-13\geq0.$$
  We get $2^{m-1}=x\geq1,513\ldots$, then $\Gamma_{Q(2m,2)}$ is Ramanujan when $m\geq2$.\\
   \item[3.1] In the $m$ even case, the inequality arising from Equation \eqref{Rama} gives:
  $$2^{m-2}-1\leq2\sqrt{(2^{m-1}+1)(2^{m-2}-1)-1},$$
  $$(2^{m-2}-1)^2\leq4(2^{2m-3}-2^{m-1}+2^{m-2}-2),$$
  $$2^{2m-4}-2\cdot2^{m-2}+1\leq4\cdot2^{2m-3}-4\cdot2^{m-1}+4\cdot2^{m-2}-8,$$
  $$2^{2m-4}-2\cdot2^{m-2}+1\leq8\cdot2^{2m-4}-8\cdot2^{m-2}+4\cdot2^{m-2}-8,$$
  $$7\cdot2^{2m-4}-2\cdot2^{m-2}-9\geq0.$$
  By substituting $x=2^{m-2}$ we get
  $$7x^{2}-2x-9\geq0.$$
  We get $2^{m-2}=x\geq1,286\ldots$, then $NU(m,4)$, $m$ even, is Ramanujan when $m\geq4$.\\
 \item[3.2] In the $m$ odd case, the inequality arising from Equation \eqref{Rama} gives:
  $$2^{m-2}+1\leq2\sqrt{(2^{m-1}-1)(2^{m-2}+1)-1},$$
  $$(2^{m-2}+1)^2\leq4(2^{2m-3}+2^{m-1}-2^{m-2}-2),$$
  $$2^{2m-4}+2\cdot2^{m-2}+1\leq4\cdot2^{2m-3}+4\cdot2^{m-1}-4\cdot2^{m-2}-8,$$
  $$2^{2m-4}+2\cdot2^{m-2}+1\leq8\cdot2^{2m-4}+8\cdot2^{m-2}-4\cdot2^{m-2}-8,$$
  $$7\cdot2^{2m-4}+2\cdot2^{m-2}-9\geq0.$$
  By substituting $x=2^{m-2}$ we get
  $$7x^{2}+2x-9\geq0.$$
  Since $2^{m-2}=x\geq1$, $NU(m,4)$, $m$ odd, is Ramanujan when $m\geq3$.
  \end{enumerate}
\end{proof}

 \begin{remark}
  We end the section with a few words about the case $q>2$. As $q$ grows, the tangent unitary graphs are non-Ramanujan. In fact, we get $\lambda(NU(m,q^2))=q^{m-3}(q^2-q-1)+\varepsilon$ and the inequality arising from Equation \eqref{Rama} gives:
  $$q^{m-3}(q^2-q-1)+\varepsilon\leq2\sqrt{(q^{m-1}+\varepsilon)(q^{m-2}-\varepsilon)-1},$$
  $$(q^{m-3}(q^2-q-1)+\varepsilon)^2\leq4(q^{2m-3}-\varepsilon q^{m-1}+\varepsilon q^{m-2}-2).$$
 We get
  $$q^{2m-2}-6q^{2m-3}-q^{2m-4}+2q^{2m-5}+q^{2m-6}+\varepsilon6q^{m-1}-\varepsilon6q^{m-2}-\varepsilon2q^{m-3}+8\leq0,$$
  that holds asymptotically false, as $q\rightarrow\infty$.
  \end{remark}

  \section{Conclusion}
  In \cite{ABDN}, \textit{almost Ramanujan graphs} found applications in strong blocking sets, which in turn are applied to minimal linear block codes and perfect hash families. Roughly speaking, it is crucial the (extremal) property called \textit{integrity}, which relies on the size of the maximum connected component of a subgraph, and whose value is strictly connected to the largest eigenvalue of a $(n,d,\lambda)$-graph. Moreover in \cite{SS}, the authors provided a construction of a family of \emph{LDPC-codes}, i.e. codes with a low-density parity-check matrix, which arise from bipartite expander graphs. Further developments may find analogous applications of Ramanujan polar graphs in coding theory.

%------------------------------------------------------------------------------------%

\vskip 0.4 true cm

\begin{center}{\textbf{Acknowledgments}}
\end{center}
 The research was partially supported by the Italian National Group for Algebraic and Geometric Structures and their Applications (GNSAGA - INdAM) and by the INdAM - GNSAGA Project \emph{Tensors over finite fields and their applications}, number E53C23001670001. Both figures belong to the \textit{House of Graphs} database, \cite{HoG}. \\ \\
\vskip 0.4 true cm

%------------------------------------------------------------------------------------%
%---------------------------------------------------------------------------------------%
%%%%%%%%%%%%%%%%%%%%%%%%%%%%%%%%%%%%%%%%%%%
% References
%%%%%%%%%%%%%%%%%%%%%%%%%%%%%%%%%%%%%%%%%%%
\bibliographystyle{amsplain}
%%%%%%%%%%%%%%%%%%%%%%%%%%%%%%%%%%%%%%%%%%%
% Please cite your relevant papers but at most total 5 papers/books.
%%%%%%%%%%%%%%%%%%%%%%%%%%%%%%%%%%%%%%%%%%%

\bigskip
\bigskip

%{\bf Received: Month xx, 200x}

{\footnotesize{\bf Valentino Smaldore}\; \\ {Dipartimento di Tecnica e Gestione dei Sistemi Industriali}, {Universit\`{a} degli Studi di Padova, Stradella S. Nicola 3, 36100} {Vicenza, Italy.}\\
{\tt Email: valentino.smaldore@unipd.it}\\

\end{document}